\newtheorem{theorem}{Theorem}[section]
\newtheorem{lemma}[theorem]{Lemma}
\newtheorem{definition}[theorem]{Definition}
\newtheorem{remark}[theorem]{Remark}
\newtheorem{example}[theorem]{Example}
\newcommand{\bbZ}{\mathbb{Z}}
\newcommand{\bbR}{\mathbb{R}}
\newcommand{\bbC}{\mathbb{C}}
\newcommand{\calB}{\mathcal{B}}
\newcommand{\calF}{\mathcal{F}}
\newcommand{\calG}{\mathcal{G}}
\newcommand{\rme}{\mathrm{e}}
\newcommand{\rmi}{\mathrm{i}}
\DeclarePairedDelimiterX\set[2]{\lbrace}{\rbrace}%
 { #1 \,\delimsize| \,\mathopen{} #2 }
\DeclarePairedDelimiterX\abs[1]{\lvert}{\rvert}{#1}
\DeclarePairedDelimiterX\norm[1]{\lVert}{\rVert}{#1}
\DeclareFontFamily{U}{mathx}{\hyphenchar\font45}
\DeclareFontShape{U}{mathx}{m}{n}{
      <5> <6> <7> <8> <9> <10>
      <10.95> <12> <14.4> <17.28> <20.74> <24.88>
      mathx10
      }{}
\DeclareSymbolFont{mathx}{U}{mathx}{m}{n}
\DeclareMathAccent{\widecheck}{0}{mathx}{"71}
\title{Uncovering the limits of uniqueness in sampled Gabor phase retrieval: A dense set of counterexamples in $L^2(\bbR)$}
\author{\IEEEauthorblockN{Rima Alaifari}
\IEEEauthorblockA{ETH Z\"urich \\ Department of Mathematics \\ Email: \href{mailto:rima.alaifari@sam.math.ethz.ch}{rima.alaifari@sam.math.ethz.ch}}
\and
\IEEEauthorblockN{Francesca Bartolucci}
\IEEEauthorblockA{TU Delft \\
Institute of Applied Mathematics \\
Email: \href{mailto:f.bartolucci@tudelft.nl}{f.bartolucci@tudelft.nl}}
\and
\IEEEauthorblockN{Matthias Wellershoff}
\IEEEauthorblockA{University of Maryland \\
Department of Mathematics \\
Email: \href{mailto:wellersm@umd.edu}{wellersm@umd.edu}}}
\begin{document}

\maketitle

\begin{abstract}
    Sampled Gabor phase retrieval --- the problem of recovering a square-integrable signal from the magnitude of its Gabor transform sampled on a lattice --- is a fundamental problem in signal processing, with important applications in areas such as imaging and audio processing. Recently, a classification of square-integrable signals which are not phase retrievable from Gabor measurements on parallel lines has been presented. This classification was used to exhibit a family of counterexamples to uniqueness in sampled Gabor phase retrieval. Here, we show that the set of counterexamples to uniqueness in sampled Gabor phase retrieval is dense in $L^2(\bbR)$, but is not equal to the whole of $L^2(\bbR)$ in general. Overall, our work contributes to a better understanding of the fundamental limits of sampled Gabor phase retrieval.
\end{abstract}

\begin{IEEEkeywords}
    Phase retrieval, Gabor transform, sampling result
\end{IEEEkeywords}

\section{Introduction}

Phase retrieval is a term broadly applied to problems in which information about complex phase needs to be inferred from data. The origins of phase retrieval can be traced back to the years 1915 -- 1929 when W.~H.~Bragg and W.~L.~Bragg (among others) used X-ray diffraction images of crystals in order to illuminate their atomic structure \cite{bragg1915bakerian,bragg1929determination,robertson1937xray,bragg1961rutherford,sayre2002xray}. Since then phase retrieval has found applications in various fields such as crystallography, electron microscopy and astronomy \cite{fienup1982phase,dainty1987phase,shechtman2015phase}.

Gabor phase retrieval refers to problems in which signals $f \in L^2(\bbR)$ have to be reconstructed from magnitudes of their \emph{Gabor transform},
\begin{equation*}
    \mathcal{G} f (x,\omega) := 2^{1/4} \int_\bbR f(t) \rme^{-\pi(t-x)^2} \mathrm{e}^{-2\pi\mathrm{i} t \omega} \, \mathrm{d} t, \quad (x,\omega) \in \bbR^2.
\end{equation*}
It has been used in a range of audio processing tasks, including the phase vocoder for time-stretching and pitch-shifting of audio signals \cite{prusa2017phase}, as well as speech enhancement and source separation \cite{abdelmalek2022audio}.

In this contribution, we will specifically focus on the \emph{sampled Gabor phase retrieval} problem which is the recovery of signals $f$ from the magnitude measurements $(\abs{\calG f (x,\omega)})_{(x,\omega) \in \Lambda}$ where $\Lambda \subset \bbR^2$ is a lattice\footnote{A \emph{lattice} $\Lambda \subset \bbR^2$ is a discrete subset of the time-frequency plane that can be written as $L \bbZ^{k}$ where $L \in \bbR^{2 \times k}$ is a matrix with linearly independent columns and $k \in \{1,2\}$.}. We focus on this sampling problem because magnitude information on the entire time-frequency plane $\bbR^2$ is not available in practice. Instead, only a finite number of measurements are stored and inferences are made based on them. We consider the sampled setup proposed above as a natural and useful compromise between the fully continuous case, where no sampling occurs, and the fully discrete case, where the signals are finite-dimensional vectors.

\subsection{Prior arts: Counterexamples to uniqueness in sampled Gabor phase retrieval}

In the following, we will focus on counterexamples to uniqueness in sampled Gabor phase retrieval; i.e.~signals whose Gabor transform magnitudes agree on a lattice but which are fundamentally different from each other. Before introducing the concept of a counterexample rigorously, we need to emphasise that there is one ever-present ambiguity in Gabor phase retrieval: the global phase ambiguity. Two signals $f,g \in L^2(\bbR)$ are said to \emph{agree up to global phase} if they are equivalent with respect to the relation 
\begin{equation*}
    f \sim g :\iff f = \rme^{\rmi \alpha} g, \mbox{ for some } \alpha \in \bbR.
\end{equation*}
Notably, $f \sim g$ implies $\abs{\calG f} = \abs{\calG g}$ such that signals which agree up to global phase cannot be recovered from Gabor transform magnitudes. With this in mind, we define the set of counterexamples.

\begin{definition}[Counterexamples]\label{def:counterexamples}
    Let $\Lambda \subset \bbR^2$. The \emph{set of counterexamples} to uniqueness in Gabor phase retrieval on $\Lambda$ is defined by
    \begin{multline*}
        \mathfrak{C}(\Lambda) := \left\{ f \in L^2(\bbR) \,\middle| \, \abs{\calG f} = \abs{\calG g} \mbox{ on } \Lambda \mbox{ and } f \not\sim g, \right. \\
        \left. \mbox{for some } g \in L^2(\bbR) \right\}.
    \end{multline*}
    An element $f \in \mathfrak{C}(\Lambda)$ is called a \emph{counterexample} to uniqueness in sampled Gabor phase retrieval on $\Lambda$.
\end{definition} 

Counterexamples to uniqueness in sampled Gabor phase retrieval are interesting for two reasons. First, they allow us to better understand the fundamental limits of sampled Gabor phase retrieval, which in turn can guide future research towards achieving uniqueness. In addition, they offer the opportunity to explore the potential relationship between uniqueness and stability in phase retrieval \cite{alaifari2022connection}.

Let us briefly summarise the recent research on counterexamples in Gabor phase retrieval. The relationship between the Gabor transform and the Bargmann transform (which is described in more detail in Section~\ref{sec:Gabor_Bargmann}) allows for the relation of the Gabor phase retrieval problem to a phase retrieval problem for entire functions. This was realised in \cite{mcdonald2004phase,jaming2014uniqueness}. Then, following these ideas, a characterisation of all entire functions of exponential-type whose magnitudes agree on any set of infinitely many equidistant parallel lines was proven in \cite{wellershoff2023phase}.

Using this characterisation and noting that all lattices are a subset of some set of infinitely many equidistant parallel lines, it becomes possible to construct various types of counterexamples. This idea has been applied in \cite{alaifari2021phase} to construct explicit counterexamples to uniqueness in sampled Gabor phase retrieval on any lattice. (See \cite{wellershoff2023phase} for a more in-depth explanation.) An extension of the results in \cite{alaifari2021phase} has appeared in \cite{grohs2022foundational}.

\subsection{Our contributions}

In this contribution, we show that the set of counterexamples $\mathfrak{C}(\Lambda)$ is dense in $L^2(\bbR)$ when $\Lambda \subset \bbR^2$ is a lattice or a set of equidistant parallel lines. We also show that the Gaussian is \emph{not} a counterexample for quadratic lattices, $\Lambda = a \bbZ^2$, with $a \in (0,1)$. Therefore, \emph{the set of counterexamples is dense but not equal to the whole of $L^2(\bbR)$ in general}. We prove these two results by using the connection between the Bargmann transform and the Gabor transform as well as some classical results from complex analysis.

Note that this contribution is a condensed and modified version of the section on the fragility of uniqueness in sampled Gabor phase retrieval in the larger manuscript \cite{alaifari2022connection}. Apart from a comprehensive treatment of counterexamples, the larger manuscript also discusses the stability of Gabor phase retrieval as well as its potential connection with uniqueness in sampled Gabor phase retrieval. Here, we focus specifically on showing that the counterexamples are dense.

\subsection*{Notation}

Rotation by $\theta \in \bbR$ on $\bbR^2$ is denoted by $\operatorname{R}_\theta : \bbR^2 \to \bbR^2$; in matrix notation, we have
\begin{equation*}
    R_\theta = \begin{pmatrix}
        \cos \theta & - \sin \theta \\
        \sin \theta & \cos \theta
    \end{pmatrix}.
\end{equation*}
Translation by $x \in \bbR$ on $L^p(\bbR)$, where $p \in [1,\infty]$, is denoted by $\operatorname{T}_x : L^p(\bbR) \to L^p(\bbR)$; i.e.
\begin{equation*}
    \operatorname{T}_x f(t) = f(t-x), \qquad t \in \bbR,
\end{equation*}
for $f \in L^p(\bbR)$. Finally, the normalised Gaussian is denoted by
\begin{equation*}
    \varphi(t) = 2^{1/4} \mathrm{e}^{-\pi t^2}, \qquad t \in \bbR.
\end{equation*}

\section{The relation between the Bargmann and Gabor transform}\label{sec:Gabor_Bargmann}

As mentioned before, we will make use of the well-known connection between the Bargmann transform and the Gabor transform \cite{grochenig2000foundations}. The \emph{Fock space} $\mathcal{F}^2(\bbC)$ is the Hilbert space of all entire functions for which the norm induced by the inner product
\begin{equation*}
    (F,G)_\mathcal{F} := \int_\bbC F(z) \overline{G(z)} \mathrm{e}^{-\pi \lvert z \rvert^2} \,\mathrm{d} z
\end{equation*}
is finite. The \emph{Bargmann transform} $\mathcal{B} : L^2(\bbR) \to \mathcal{F}^2(\bbC)$,
\begin{equation*}
    \mathcal{B} f (z) := 2^{1/4} \int_\bbR f(t) \mathrm{e}^{2 \pi t z - \pi t^2 - \frac{\pi}{2} z^2} \,\mathrm{d} t, \quad z \in \bbC,
\end{equation*}
acts as an isomorphism between $L^2(\bbR)$ and $\mathcal{F}^2(\bbC)$. It is related to the Gabor transform by the formula
\begin{equation}\label{eq:Gabor_Bargmann}
    \mathcal{G} f(x,-\omega) = \mathrm{e}^{\pi\mathrm{i}x\omega} \mathcal{B} f(x + \mathrm{i} \omega) \mathrm{e}^{-\frac{\pi}{2}(x^2 + \omega^2)},
\end{equation}
for $(x,\omega) \in \bbR^2$. It is the formula above that allows us to relate the Gabor phase retrieval problem to a phase retrieval problem for entire functions. We are going to use this relation in the sequel.

\section{The set of counterexamples is dense}\label{sec:density}

In this section, we show that the set of counterexamples $\mathfrak{C}(\Lambda)$ is dense in $L^2(\bbR)$ when $\Lambda \subset \bbR^2$ is a lattice or a set of equidistant parallel lines. Our strategy will be to design entire functions $H^\pm_\delta \in \calF^2(\bbC)$ which converge to a constant as $\delta \to 0$, do not agree up to global phase and still satisfy $\abs{H^+_\delta} = \abs{H^-_\delta}$ on $\bbR + \mathrm{i} a \mathbb{Z}$. Once these entire functions have been designed, we can multiply them with $\calB f$ and take the inverse Bargmann transform in order to get counterexamples on $\bbR \times a \bbZ$ that are close to $f \in L^2(\bbR)$.

We design the functions $H^\pm_\delta \in \calF^2(\bbC)$ by modifiying the counterexamples on $\bbR \times a \bbZ$,
\begin{equation*}
    h^\pm(t) := \varphi(t) \left( \cosh\left( \frac{\pi t}{a} \right) \pm \mathrm{i} \sinh\left( \frac{\pi t}{a} \right) \right), \quad t \in \bbR,
\end{equation*}
presented in \cite{alaifari2021phase}. To accomplish this, we compute the Bargmann transforms of $h^\pm$ which are given by
\begin{equation*}
    z \mapsto \left( 1 \mp \rmi + (1 \pm \rmi) \rme^{\frac{\pi z}{a}} \right) \rme^{-\frac{\pi z}{2a}}
\end{equation*}
up to a constant depending on $a$. Next, we note that time-shifting $h^\pm$ by $u \in \bbR$ will produce additional counterexamples on $\bbR \times a \bbZ$ according to the covariance property of the Gabor transform \cite{grochenig2000foundations}. For $f \in L^2(\bbR)$, it holds that 
\begin{equation*}
    \calB \operatorname{T}_u f (z) = \calB f (z - u) \rme^{\pi u z} \rme^{-\frac{\pi}{2} u^2}, \quad z \in \bbC,
\end{equation*}
such that the Bargmann transforms of the counterexamples $\operatorname{T}_u h^\pm$, with $u = - \tfrac{a}{\pi} \log \delta$, are given by 
\begin{equation*}
    z \mapsto \left( 1 \mp \rmi + (1 \pm \rmi) \delta \cdot \rme^{\frac{\pi z}{a}} \right) \delta^{-a z} \rme^{-\frac{\pi z}{2a}}
\end{equation*}
up to a constant depending on $a$ and $\delta$. After multiplying by $\delta^{a z} \rme^{\frac{\pi z}{2a}}$, it follows that 
\begin{equation*}
    z \mapsto 1 \mp \rmi + (1 \pm \rmi) \delta \cdot \rme^{\frac{\pi z}{a}}
\end{equation*}
are entire functions whose magnitudes agree on $\bbR + \rmi a \bbZ$ (which do not agree up to global phase) and we define 
\begin{equation}\label{eq:Fock_multipliers}
    H^\pm_\delta(z) := 1 \pm \mathrm{i} \delta \cdot \mathrm{e}^{\tfrac{\pi z}{a}}, \quad z \in \bbC,
\end{equation}
after multiplying by $(1\pm\rmi)/2$.

\begin{theorem}\label{thm:main_proof}
    Let $a > 0$. Then, $\mathfrak{C}(\bbR \times a \bbZ)$ is dense in $L^2(\bbR)$.
\end{theorem}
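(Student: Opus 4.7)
The plan is: given $f \in L^2(\bbR)$ and $\varepsilon > 0$, construct $g \in \mathfrak{C}(\bbR \times a \bbZ)$ with $\|g - f\|_{L^2(\bbR)} < \varepsilon$ by multiplying the Bargmann transform of a nearby approximation $f_\varepsilon$ of $f$ by $H^\pm_\delta$ and inverting. Because $|H^\pm_\delta(z)|^2 \leq 2(1 + \delta^2 \rme^{2\pi x/a})$ grows exponentially in $x = \operatorname{Re}(z)$, the multiplier $H^\pm_\delta$ is not bounded on $\calF^2(\bbC)$, so one cannot apply it directly to $\calB f$ for arbitrary $f$. I would therefore first approximate $f$ within $L^2(\bbR)$-distance $\varepsilon/2$ by a nonzero finite linear combination $f_\varepsilon$ of Hermite functions. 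Since the Bargmann transform sends the $n$-th Hermite function to a (normalised) monomial $z^n$, $\calB f_\varepsilon$ is then a polynomial, and a direct Gaussian estimate shows $H^\pm_\delta \calB f_\varepsilon \in \calF^2(\bbC)$ for every $\delta > 0$. I set
\begin{equation*}
    g^\pm_\delta := \calB^{-1}\bigl( H^\pm_\delta \cdot \calB f_\varepsilon \bigr) \in L^2(\bbR).
\end{equation*}

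To verify $g^+_\delta \in \mathfrak{C}(\bbR \times a \bbZ)$, I would first evaluate $H^\pm_\delta$ on the lines $\operatorname{Im}(z) \in a\bbZ$: for $n \in \bbZ$,
\begin{equation*}
    H^\pm_\delta(x + \rmi a n) = 1 \pm (-1)^n \rmi \delta \rme^{\pi x/a}, \quad x \in \bbR,
\end{equation*}
so $|H^+_\delta| = |H^-_\delta|$ on $\bbR + \rmi a \bbZ$. Combined with \eqref{eq:Gabor_Bargmann} and $-a\bbZ = a\bbZ$, this yields $|\calG g^+_\delta| = |\calG g^-_\delta|$ on $\bbR \times a\bbZ$. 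Next, to rule out $g^+_\delta \sim g^-_\delta$, suppose $H^+_\delta \calB f_\varepsilon = \rme^{\rmi\alpha} H^-_\delta \calB f_\varepsilon$ for some $\alpha \in \bbR$. Since $\calB f_\varepsilon$ is a nonzero polynomial with discrete zero set, analytic continuation forces $H^+_\delta \equiv \rme^{\rmi\alpha} H^-_\delta$ on $\bbC$; in particular $|H^+_\delta| \equiv |H^-_\delta|$. But a direct calculation gives
\begin{equation*}
    |H^+_\delta(z)|^2 - |H^-_\delta(z)|^2 = -4\delta \rme^{\pi x/a} \sin(\pi y/a), \quad z = x + \rmi y,
\end{equation*}
which does not vanish identically for $\delta > 0$ --- a contradiction.

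Finally, for the convergence, the isometry of $\calB$ gives
\begin{equation*}
    \|g^+_\delta - f_\varepsilon\|_{L^2(\bbR)}^2 = \|(H^+_\delta - 1) \calB f_\varepsilon\|_{\calF^2(\bbC)}^2 = \delta^2 \int_\bbC \rme^{2\pi x/a} |\calB f_\varepsilon(z)|^2 \rme^{-\pi|z|^2} \dd z,
\end{equation*}
and the last integral is finite since $\calB f_\varepsilon$ is a polynomial and $\rme^{2\pi x/a - \pi x^2}$ has Gaussian decay. Choosing $\delta$ small enough makes this $< (\varepsilon/2)^2$, and the triangle inequality $\|g^+_\delta - f\|_{L^2} \leq \|g^+_\delta - f_\varepsilon\|_{L^2} + \|f_\varepsilon - f\|_{L^2}$ then closes the proof.

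The main obstacle is the unboundedness of the multiplier $H^\pm_\delta$ on $\calF^2(\bbC)$; restricting to the dense class of signals with polynomial Bargmann image bypasses it and reduces the remainder of the argument to short pointwise computations combined with the isometric property of $\calB$.
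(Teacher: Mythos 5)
Your proposal is correct and follows essentially the same route as the paper: the paper approximates $\calB f$ by a polynomial $P$ in $\calF^2(\bbC)$ (equivalently, approximates $f$ by a finite Hermite expansion, as you do), multiplies by the same multipliers $H^\pm_\delta$, and inverts the Bargmann transform, with the same $\epsilon/2 + \epsilon/2$ triangle-inequality conclusion. Your explicit computation of $|H^+_\delta|^2 - |H^-_\delta|^2$ to rule out agreement up to global phase is a minor variant of the paper's zero-set argument, but the overall structure is identical.
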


\begin{proof}
    Let $\epsilon > 0$ and $f \in L^2(\bbR)$. We want to show that there exist $g^\pm \in L^2(\bbR)$ which do not agree up to global phase, are $\epsilon$-close to $f$ in $L^2(\bbR)$ and satisfy 
    \begin{equation*}
        \lvert \mathcal{G} g^+ \rvert = \lvert \mathcal{G} g^- \rvert \mbox{ on } \bbR \times a \bbZ.
    \end{equation*}
    To do so, we note that the monomials
    \begin{equation*}
        e_n(z) := \left(\frac{\pi^n}{n!}\right)^{1/2} z^n, \qquad n \in \mathbb{N}_0,~z \in \bbC,
    \end{equation*}
    form an orthonormal basis for the Fock space $\mathcal{F}^2(\bbC)$ \cite{grochenig2000foundations}. Therefore, the space of complex polynomials is dense in $\mathcal{F}^2(\bbC)$ and we can find $P \in \bbC[z]$ such that 
    \begin{equation*}
        \norm*{ \mathcal{B} f - P }_{\mathcal{F}} < \frac{\epsilon}{2}.
    \end{equation*}
    Let us now consider the entire functions $H^\pm_\delta$ defined in equation~\eqref{eq:Fock_multipliers} and note that $G_\delta^\pm := H_\delta^\pm \cdot P \in \mathcal{F}^2(\bbC)$ since $G^\delta_\pm$ are entire functions of exponential-type. Hence, we can define the signals $g^\pm_\delta := \mathcal{B}^{-1} G^\pm_\delta \in L^2(\bbR)$. To establish the desired properties of $g^\pm_\delta$, we will work with their Bargmann transforms $G_\delta^\pm$. First, we note that $\abs{H^+_\delta} = \abs{H^-_\delta}$ on $\bbR + \mathrm{i} a \mathbb{Z}$ implies $\lvert G^+_\delta \rvert = \lvert G^-_\delta \rvert$ on $\bbR + \mathrm{i} a\bbZ$ and thus $\lvert \mathcal{G} g^+_\delta \rvert = \lvert \mathcal{G} g^-_\delta \rvert$ on $\bbR \times a \bbZ$ by equation~\eqref{eq:Gabor_Bargmann}. Secondly, we note that the entire functions $G^\pm_\delta$ do not agree up to global phase: indeed, both entire functions $H^\pm_\delta$ have infinitely many roots but no root of $H^+_\delta$ is a root of $H^-_\delta$ and vice versa. At the same time, $P$ is a polynomial and has only finitely many roots. It follows that $G^+_\delta$ does have roots which are not roots of $G^-_\delta$ (and vice versa) and thus $G^+_\delta \not \sim G^-_\delta$. By the linearity of the Bargmann transform, we can conclude that $g^+_\delta \not\sim g^-_\delta$. Finally, we note that the definition of $H_\delta^\pm$ in equation~\eqref{eq:Fock_multipliers} directly implies that
    \begin{equation*}
        \norm{ P - P \cdot H^\pm_\delta }_\mathcal{F} = \delta \norm{ z \mapsto P(z) \cdot \mathrm{e}^{\pi z/a} }_\mathcal{F},
    \end{equation*}
    and so there exists a $\delta > 0$ depending on $a$, $\epsilon$ and $P$ (which in turn depends on $f$ and $\epsilon$) such that 
    \begin{equation*}
        \norm{ P - P \cdot H^\pm_\delta }_\mathcal{F} < \frac{\epsilon}{2}.
    \end{equation*}
    We conclude 
    \begin{align*}
        \norm{ f - g^\pm_\delta }_2 &= \norm{ \mathcal{B} f - H^\delta_\pm \cdot P }_\mathcal{F} \\
        &\leq \norm{ \mathcal{B} f - P }_\mathcal{F} + \norm{ P - H^\delta_\pm \cdot P }_\mathcal{F}
        < \epsilon.
    \end{align*}
\end{proof}

\begin{remark}[Some explanations on the proof]
    \label{rem:interesting_examples}
    As $\mathcal{B} f \in \mathcal{F}^2(\bbC)$, for $f \in L^2(\bbR)$, we know that $\mathcal{B} f$ is either an entire function of exponential-type or an entire function of second order. If $\mathcal{B} f$ is of second order, then its type is either strictly smaller than $\pi/2$ or exactly $\pi/2$. In most of these cases, it holds that $\mathcal{B} f \cdot H^\pm_\delta \in \mathcal{F}^2(\bbC)$ and thus we can define 
    \begin{equation*}
        g^\pm_\delta := \mathcal{B}^{-1} \left( \mathcal{B} f \cdot H^\pm_\delta \right) \in L^2(\bbR),
    \end{equation*}
    with 
    \begin{equation*}
        \delta < \frac{\epsilon}{\norm{z \mapsto \mathcal{B} f (z) \mathrm{e}^{\pi z/a} }_\mathcal{F}},
    \end{equation*}
    to obtain counterexamples which are $\epsilon$-close to $f$ in $L^2(\bbR)$. We note that $g^\pm_\delta$ are small additive perturbations of our original signals $f$.

    Unfortunately, there is one case in which this simple strategy does not work: the one in which $\mathcal{B} f$ is a second-order entire function of type $\pi/2$. Indeed, in this case, it is not guaranteed that $\mathcal{B} f \cdot H^\pm_\delta$ is in the Fock space. --- Two striking examples for why this can fail can be found in \cite{beneteau2010extremal}. --- As the only situation in which $\mathcal{B} f \cdot H^\pm_\delta$ is not in the Fock space occurs when $\mathcal{B} f$ is exactly of order two and of type $\pi/2$, it seems obvious that the functions $f$ for which $\mathcal{B} f \cdot H^\pm_\delta \in \mathcal{F}^2(\bbC)$ holds must be dense in $L^2(\bbR)$. We can prove this by realising that the complex polynomials are dense in $\mathcal{F}^2(\bbC)$. 
\end{remark}

Theorem~\ref{thm:main_proof} continues to hold for any set of infinitely many equidistant parallel lines. We can show this by considering the entire functions
\begin{equation*}
    H^\pm_\delta(z) := 1 \pm \mathrm{i} \delta \exp\left( \frac{\pi \mathrm{e}^{\mathrm{i} \theta}}{a} \left( z - \overline \lambda_0 \right) \right)
\end{equation*}
and realising that the corresponding signals $g^\pm_\delta \in L^2(\bbR)$ satisfy 
\begin{equation*}
    \lvert \mathcal{G} g^+_\delta \rvert = \lvert \mathcal{G} g^-_\delta \rvert \mbox{ on } \operatorname{R}_\theta \left( \bbR \times a \bbZ \right) + \lambda_0,
\end{equation*}
where $a > 0$, $\lambda_0 \in \bbR^2 \simeq \bbC$. The statement for general lattices follows from the same consideration because all lattices are subsets of some set of infinitely many equidistant parallel lines. We therefore arrive at the following result.

\begin{theorem}\label{thm:main}
    Let $\Lambda \subset \bbR^2$ be a set of equidistant parallel lines or a lattice. Then, $\mathfrak{C}(\Lambda)$ is dense in $L^2(\bbR)$.
\end{theorem}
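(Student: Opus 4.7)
The plan is to reduce to the quadratic-lattice case established in Theorem~\ref{thm:main_proof} by suitably rotating and translating the building-block functions $H^\pm_\delta$. A first observation is that every lattice $\Lambda = L\bbZ^2 \subset \bbR^2$ is contained in a set of equidistant parallel lines: writing the columns of $L$ as $v_1, v_2$, one has $\Lambda \subset \bigcup_{n \in \bbZ}(n v_1 + \bbR v_2)$. Because inclusion is reversed for counterexample sets --- any $f \in \mathfrak{C}(\Lambda_1)$ with $\Lambda_1 \supseteq \Lambda_2$ automatically lies in $\mathfrak{C}(\Lambda_2)$ --- it suffices to prove the density statement for $\Lambda$ a set of equidistant parallel lines.

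Parametrize any such set as $\Lambda = \operatorname{R}_\theta(\bbR \times a\bbZ) + \lambda_0$ with $\theta \in \bbR$, $a > 0$, and $\lambda_0 \in \bbR^2 \simeq \bbC$, and introduce the rotated-translated analogs of \eqref{eq:Fock_multipliers}:
\begin{equation*}
    H^\pm_\delta(z) := 1 \pm \rmi \delta \exp\left( \frac{\pi \rme^{\rmi\theta}}{a}(z - \overline{\lambda_0}) \right), \qquad z \in \bbC.
\end{equation*}
These are entire functions of exponential type, so for any polynomial $P \in \bbC[z]$ the product $H^\pm_\delta \cdot P$ lies in $\calF^2(\bbC)$. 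The key computation is to verify $\abs{H^+_\delta} = \abs{H^-_\delta}$ on the Bargmann-side image of $\Lambda$. By equation~\eqref{eq:Gabor_Bargmann}, equality of $\abs{\calG f}$ and $\abs{\calG g}$ at $(x,\omega)$ corresponds to equality of $\abs{\calB f}$ and $\abs{\calB g}$ at $x - \rmi\omega$, so the relevant discrete set is $\overline{\Lambda} = \rme^{-\rmi\theta}(\bbR + \rmi a\bbZ) + \overline{\lambda_0}$ under the identification $(x,\omega) \leftrightarrow x + \rmi\omega$. Substituting $z = \overline{\lambda_0} + \rme^{-\rmi\theta}(x + \rmi a n)$ collapses the exponent in $H^\pm_\delta$ to $\tfrac{\pi x}{a} + \rmi \pi n$, so that the inner exponential equals $(-1)^n \rme^{\pi x/a} \in \bbR$, and $\abs{H^\pm_\delta(z)} = \abs{1 \pm \rmi (-1)^n \delta \rme^{\pi x/a}}$ is independent of the sign.

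With this symmetry in place, the remainder is a transcription of the proof of Theorem~\ref{thm:main_proof}: given $f \in L^2(\bbR)$ and $\epsilon > 0$, approximate $\calB f$ by a polynomial $P$ with $\norm{\calB f - P}_\calF < \epsilon/2$; set $g^\pm_\delta := \calB^{-1}(H^\pm_\delta \cdot P) \in L^2(\bbR)$; transfer the Bargmann-side magnitude equality back to $\abs{\calG g^+_\delta} = \abs{\calG g^-_\delta}$ on $\Lambda$ via \eqref{eq:Gabor_Bargmann}; deduce $g^+_\delta \not\sim g^-_\delta$ from the fact that $H^+_\delta$ and $H^-_\delta$ have infinite disjoint zero sets while $P$ has only finitely many zeros; and finally choose $\delta$ small enough (depending on $a, \theta, \lambda_0, \epsilon, P$) that $\norm{P \cdot H^\pm_\delta - P}_\calF < \epsilon/2$. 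The main obstacle in the whole argument is the geometric bookkeeping of the previous paragraph: one must carefully track how the conjugation in \eqref{eq:Gabor_Bargmann} transforms the sampling set, which is precisely why the exponent of $H^\pm_\delta$ contains the factor $\rme^{\rmi\theta}$ (cancelling the rotation $\rme^{-\rmi\theta}$ induced on the Bargmann side) and the shift $\overline{\lambda_0}$ rather than $\lambda_0$.
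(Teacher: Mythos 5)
Your proposal is correct and follows essentially the same route as the paper: the paper likewise introduces the rotated and shifted multipliers $H^\pm_\delta(z) = 1 \pm \rmi\delta\exp\bigl(\tfrac{\pi \rme^{\rmi\theta}}{a}(z-\overline{\lambda_0})\bigr)$, reruns the argument of Theorem~\ref{thm:main_proof}, and handles general lattices by noting they are subsets of a set of equidistant parallel lines. Your write-up merely makes explicit the conjugation bookkeeping and the magnitude computation that the paper leaves implicit, and both are carried out correctly.
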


To illustrate our main results, we construct counterexamples that are close to the Hermite functions and plot their spectrograms.

\begin{example}
    Consider the $n$-th Hermite function $H_n \in L^2(\bbR)$ given by
    \begin{equation*}
        \mathcal{B} H_n(z) = e_n(z) = \left(\frac{\pi^n}{n!}\right)^{1/2} z^n, \qquad z \in \bbC.
    \end{equation*}
    By equation~\eqref{eq:Gabor_Bargmann}, the Gabor transform of the Hermite function is
    \begin{align*}
        \mathcal{G} H_n(x,\omega) &= \mathrm{e}^{-\pi\mathrm{i} x \omega} \mathcal{B} H_n(x - \mathrm{i} \omega) \mathrm{e}^{-\frac{\pi}{2}\left( x^2 + \omega^2 \right)} \\
        &= \left(\frac{\pi^n}{n!}\right)^{1/2} \mathrm{e}^{-\pi\mathrm{i} x \omega} \left( x - \mathrm{i} \omega \right)^n \mathrm{e}^{-\frac{\pi}{2}\left( x^2 + \omega^2 \right)},
    \end{align*}
    for $(x,\omega) \in \bbR^2$. If we plot the magnitude of the above (for $n = 5$), we obtain Figure~\ref{fig:Gabor_Hermite}. Next, we want to find counterexamples which are close to $H_n$. According to Remark~\ref{rem:interesting_examples}, we can define $g^+_\delta := \mathcal{B}^{-1} ( \mathcal{B} H_n \cdot H^+_\delta )$. Let us visualise the spectrogram of $g^+_\delta$, i.e.
    \begin{equation*}
        \mathcal{G} g^+_\delta(x,\omega) = \mathcal{G} H_n(x,\omega) \cdot H^+_\delta(x - \mathrm{i} \omega), \quad (x,\omega) \in \bbR^2,
    \end{equation*}
    in Figure \ref{fig:counterexample} (for $n = 5$, $a = \tfrac{1}{4}$ and $\delta = \tfrac{1}{50} \exp(-10 \pi)$).
\end{example}

\begin{figure}
    \centering
    \subfloat[$\lvert \mathcal{G} H_5 \rvert$]{\label{fig:Gabor_Hermite}
        \includegraphics[width=.4\textwidth]{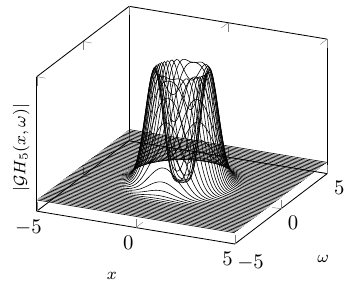}}
    \\
    \subfloat[$\lvert \mathcal{G} g_\delta^+ \rvert$, for $\delta = \tfrac{1}{50} \exp(-10 \pi)$.]{\label{fig:counterexample}
        \includegraphics[width=.4\textwidth]{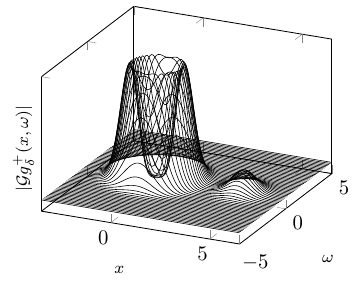}}
    \caption{The Gabor magnitude of the fifth Hermite function (Figure~\ref{fig:Gabor_Hermite}) and of a counterexample $g_\delta^+$ to sampled Gabor phase retrieval on $\bbR \times \tfrac{1}{4} \bbZ$ (Figure~\ref{fig:counterexample}).}
    \label{fig:somename}
\end{figure}

\section{The Gaussian is not a counterexample}\label{sec:Gauss}

Finally, we can show that the Gaussian is not a counterexample on $a \bbZ^2$ if $a \in (0,1)$. Specifically, we prove the following result.

\begin{theorem}\label{thm:original}
    Let $0 < a < 1$ and $f \in L^2(\bbR)$ be such that 
    \[
        \lvert \mathcal{G} f(x,\omega) \rvert^2  = \lvert \mathcal{G} \varphi(x,\omega) \rvert^2, \qquad (x,\omega) \in a \mathbb{Z}^2.
    \]
    Then, there exists an $\alpha \in \bbR$ such that $f = \mathrm{e}^{\mathrm{i} \alpha} \varphi$.
\end{theorem}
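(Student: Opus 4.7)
My first step is to translate the problem into the Bargmann--Fock setting via equation~\eqref{eq:Gabor_Bargmann}. A direct Gaussian integral (complete the square in the exponent of $\mathcal{B}\varphi$) gives $\mathcal{B}\varphi \equiv 1$, so setting $F := \mathcal{B}f \in \mathcal{F}^2(\bbC)$ the hypothesis $|\mathcal{G}f|^2 = |\mathcal{G}\varphi|^2$ on $a\bbZ^2$ becomes
\[
|F(z)| = 1 \qquad \text{for every } z \in \Lambda := a(\bbZ + \mathrm{i}\bbZ).
\]
Since $\mathcal{B}$ is a unitary isomorphism with $\mathcal{B}^{-1}(c) = c\varphi$ for any constant $c \in \bbC$, it suffices to show that $F$ must be a unimodular constant.

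My strategy is to introduce the auxiliary entire function $G(z) := F(z)\overline{F(\bar z)}$, which is entire of order at most $2$ and type at most $\pi$, is non-negative on $\bbR$ (since $G(x) = |F(x)|^2$), and satisfies $G(am) = 1$ for every $m \in \bbZ$. Granted the intermediate claim
\[
G \equiv 1 \ \text{on } \bbR,
\]
the conclusion falls out cleanly: by the identity principle $G \equiv 1$ on all of $\bbC$; in particular $F$ has no zeros, because any zero $z_0$ of $F$ would force $G(\bar z_0) = F(\bar z_0)\cdot 0 = 0 \neq 1$. Hadamard factorisation then writes $F = \mathrm{e}^{P}$ for a polynomial $P$ of degree at most $2$, and the functional equation $F(z)\overline{F(\bar z)} \equiv 1$ forces the coefficients of $P$ to be purely imaginary, i.e.\ $P(z) = \mathrm{i}(\beta_0 + \beta_1 z + \beta_2 z^2)$ with $\beta_j \in \bbR$. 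A short computation gives $|F(x + \mathrm{i} y)| = \mathrm{e}^{-\beta_1 y - 2\beta_2 x y}$; imposing $|F(\lambda)| = 1$ at $\lambda = \mathrm{i} a$ and at $\lambda = a + \mathrm{i} a$ yields $\beta_1 = \beta_2 = 0$, so $F \equiv \mathrm{e}^{\mathrm{i}\beta_0}$ is a unimodular constant, as desired.

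The main obstacle is therefore the intermediate claim $G \equiv 1$ on $\bbR$. Here the assumption $a < 1$ enters through the Beurling density $1/a > 1$ of $a\bbZ$ in $\bbR$, which strictly exceeds the critical density for entire functions of exponential type $\pi$ restricted to the real axis; simultaneously, the Fock-space membership of $F$ supplies the two-dimensional growth bound $|G(z)| \leq C \mathrm{e}^{\pi |z|^2}$ and the Gaussian-weighted integrability $\int_{\bbR} G(x) \mathrm{e}^{-\pi x^2} \, \mathrm{d} x < \infty$. A natural attempt is to factor $G(z) - 1 = \sin(\pi z/a)\, H(z)$ with $H$ entire and to balance the Fock growth of $G$ against $|\sin(\pi z/a)| \gtrsim \mathrm{e}^{\pi |\mathrm{Im}\, z|/a}$, but the naive balance does not close: an order-$2$, type-$\pi$ entire function may carry up to $\sim \pi R^2$ zeros in the disc $|z| < R$ whereas $a\bbZ$ only supplies $\sim 2R/a$ real zeros of $G - 1$. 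Closing this gap, I expect, requires either a Phragm\'en--Lindel\"of-type principle adapted to quadratic Fock growth, or a combination of the non-negativity of $G|_{\bbR}$ with the weighted $L^1$ integrability and a Poisson-summation / Gauss--Hermite-type identity that converts the strict super-criticality $1/a > 1$ into the pointwise rigidity $G \equiv 1$ on $\bbR$.
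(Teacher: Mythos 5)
Your reduction to the Fock space is exactly the paper's first move ($\mathcal{B}\varphi \equiv 1$, hence $|F|=1$ on $a\bbZ + \mathrm{i}a\bbZ$ for $F = \mathcal{B}f$), and your endgame (zero-freeness, Hadamard factorisation $F = \mathrm{e}^{P}$ with $\deg P \le 2$, purely imaginary coefficients, then two lattice points to kill $\beta_1,\beta_2$) is sound \emph{conditional} on the intermediate claim. But that intermediate claim --- $G \equiv 1$ on $\bbR$ for $G(z) = F(z)\overline{F(\bar z)}$ --- is the entire content of the theorem, and you do not prove it; you explicitly list strategies you expect might work. The obstruction you identify is real and, as stated, fatal to your route: the claim as you set it up uses only the one-dimensional sublattice $a\bbZ$ (plus nonnegativity and Gaussian-weighted integrability of $G|_\bbR$), and an order-$2$, type-$\pi$ function can carry $\sim \pi R^2$ zeros in $|z|<R$ against the $\sim 2R/a$ real zeros of $G-1$, so no zero-counting or density argument on the real line alone can close. (Nonnegativity does not rescue it either: functions of the shape $1 + c\sin^2(\pi x/a)\cdot(\text{nonnegative})$ are $\ge 0$, equal $1$ on $a\bbZ$, and are not identically $1$.) The information that must be exploited is the boundedness of $F$ on the full two-dimensional lattice, and your argument defers that to the last two points only.

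The paper closes exactly this gap with a classical rigidity theorem of Iyer and Pfluger: if $h$ is entire with $\limsup_{r\to\infty} \log M_h(r)/r^2 < \pi/2$ and $|h(m+\mathrm{i}n)| \le \kappa$ for all $m,n \in \bbZ$, then $h$ is constant. Setting $h(z) := F(az)$, the pointwise Fock bound $|F(z)| \le \norm{F}_\mathcal{F}\,\mathrm{e}^{\frac{\pi}{2}|z|^2}$ gives $\limsup_r \log M_h(r)/r^2 \le \pi a^2/2 < \pi/2$ --- this is precisely where $a<1$ enters, as a two-dimensional density condition against the quadratic Fock growth, not as a Beurling density of $a\bbZ$ in $\bbR$ against exponential type --- and $|h|=1$ on $\bbZ+\mathrm{i}\bbZ$, so $h$, hence $F$, is a unimodular constant. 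This bypasses $G$, Hadamard, and the interpolation step entirely. To repair your write-up you would either need to import such an Iyer--Pfluger/Phragm\'en--Lindel\"of-type result (at which point the detour through $G$ becomes unnecessary) or supply a genuinely new argument for the intermediate claim that uses the off-real lattice points; as it stands, the proof is incomplete at its central step.
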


Since the Bargmann transform of the Gaussian is one, equation~\eqref{eq:Gabor_Bargmann} implies that the theorem above is equivalent to the following lemma.

\begin{lemma}
    \label{lem:restated}
    Let $0 < a < 1$ and let $F \in \mathcal{F}^2(\mathbb{C})$ be such that 
    \[
        \lvert F(z)  \rvert = 1 = \lvert \mathcal{B} \varphi(z) \rvert, \qquad z \in a \mathbb{Z} + \mathrm{i} a \mathbb{Z}.
    \]
    Then, there exists an $\alpha \in \bbR$ such that $F = \mathrm{e}^{\mathrm{i} \alpha}$.
\end{lemma}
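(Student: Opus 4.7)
The plan is to combine Fock-space growth restrictions with the rigidity of polynomials on $\bbZ^2$. The reproducing kernel bound $|F(z)| \le \|F\|_{\calF}\rme^{\pi|z|^2/2}$ shows that $F$ is an entire function of order at most $2$ and type at most $\pi/2$, so Hadamard's factorization theorem yields
\begin{equation*}
    F(z) = z^{m_0}\rme^{P(z)}\Pi(z)
\end{equation*}
with $P$ a polynomial of degree at most $2$ and $\Pi$ a canonical Weierstrass product of genus at most $2$ over the non-zero zeros $\{z_k\}$ of $F$. Since $0 \in a\bbZ + \rmi a\bbZ$ and $|F(0)| = 1 \ne 0$, we must have $m_0 = 0$.

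Next I would argue that $F$ has no zeros at all. Taking logarithms of $|F(\lambda)| = 1$ gives
\begin{equation*}
    \operatorname{Re} P(\lambda) + \log|\Pi(\lambda)| = 0, \qquad \lambda \in a\bbZ + \rmi a\bbZ.
\end{equation*}
Evaluated at $\lambda = am + \rmi an$, the term $\operatorname{Re} P(\lambda)$ is a polynomial in $(m,n) \in \bbZ^2$ of degree at most $2$. If $F$ has only finitely many zeros, then $\log|\Pi(\lambda)|$ grows merely as $N\log|\lambda|$, and a term-by-term comparison in $(m,n)$ first forces the quadratic and linear coefficients of $P$ to vanish and then, since a constant cannot equal a function diverging to $-\infty$, forces $\Pi \equiv 1$. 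The infinite-zero case is the main technical obstacle; here one invokes the Seip--Wallsten density bound $D^{+}(\{z_k\}) \le 1 < 1/a^2$ on zero sets of $\calF^2$-functions and averages the identity over lattice annuli: the quadratic and linear contributions of $\operatorname{Re} P$ cancel by symmetry of the lattice, whereas $\log|\Pi(\lambda)|$ averages like $\tfrac{\pi \rho}{2}|\lambda|^2$ for a canonical product with zero density $\rho$, forcing $\rho = 0$ and, upon iteration of the argument, $\Pi \equiv 1$.

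Finally, with $F = \rme^{g}$ zero-free, the bound $\operatorname{Re} g(z) \le \pi|z|^2/2 + O(1)$ combined with the Borel--Carath\'eodory inequality shows that $g$ is a polynomial $\alpha_0 + \beta z + \gamma z^2$ of degree at most $2$. The condition $|F(\lambda)| = 1$ then reads $\operatorname{Re} g(am + \rmi an) = 0$ for all $(m,n) \in \bbZ^2$. Expanding produces a polynomial in $(m,n)$ whose monomial coefficients are real multiples of $\operatorname{Re} \alpha_0,\, \operatorname{Re} \beta,\, \operatorname{Im} \beta,\, \operatorname{Re}\gamma,\, \operatorname{Im}\gamma$. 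Since a polynomial in two variables vanishing on every integer point must be identically zero (fix one variable and apply the univariate case to the other), matching coefficients forces $\beta = \gamma = 0$ and $\operatorname{Re}\alpha_0 = 0$; hence $g = \rmi \alpha$ is a purely imaginary constant and $F = \rme^{\rmi\alpha}$ for some $\alpha \in \bbR$, as claimed.
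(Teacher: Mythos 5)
There is a genuine gap, and it sits exactly where the whole difficulty of the lemma is concentrated: the claim that $F$ has no zeros. Your opening reduction (the reproducing-kernel bound $\lvert F(z)\rvert \le \lVert F\rVert_{\mathcal F}\,\rme^{\pi\lvert z\rvert^2/2}$, hence order $\le 2$ and type $\le \pi/2$, hence a Hadamard factorization $F = \rme^{P}\Pi$ with $\deg P \le 2$) and your closing step (a real polynomial in $(m,n)$ vanishing on all of $\bbZ^2$ is identically zero, so a zero-free $F=\rme^{g}$ with $\operatorname{Re}g=0$ on the lattice must be a unimodular constant) are both sound. But the middle step --- forcing $\Pi \equiv 1$ --- is asserted rather than proved. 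The invoked density bound is not correct as stated: Seip--Wallst\'en's $D^{+}<1$ condition characterizes \emph{interpolating} sequences, not zero sets; zero sets of Fock-space functions are not characterized by density, and Jensen's formula only yields $n(r)=O(r^2)$ with a constant well above the critical one. The subsequent ``average over lattice annuli'' argument conflates averages of $\log\lvert\Pi\rvert$ over continuous circles (where Jensen applies and the harmonic part of $\operatorname{Re}P$ cancels) with averages over the discrete lattice points where the identity $\operatorname{Re}P(\lambda)=-\log\lvert\Pi(\lambda)\rvert$ actually holds; controlling $\log\lvert\Pi\rvert$ at lattice points when zeros cluster near them is precisely the hard analytic content, and nothing in the proposal supplies it. (The finite-zero case also has a sign slip: there $\log\lvert\Pi(\lambda)\rvert\to+\infty$, not $-\infty$.) In effect, ``$\lvert F\rvert$ bounded on a sufficiently dense lattice and of subcritical quadratic growth $\Rightarrow$ no zeros'' is essentially the theorem of Iyer and Pfluger itself, so the proposal assumes the crux rather than proving it.

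The paper avoids all of this by a one-line rescaling: set $h(z):=F(az)$, observe that $\limsup_r \log M_h(r)/r^2 \le \pi a^2/2 < \pi/2$ precisely because $a<1$, note $\lvert h\rvert = 1$ on $\bbZ+\rmi\bbZ$, and invoke the Iyer--Pfluger theorem (Theorem~\ref{thm:coreresult}) to conclude $h$ is constant of modulus one. If you want a self-contained argument along your lines, you would need to actually prove that theorem (or the no-zeros claim), and your use of the strict inequality $a<1$ --- which is exactly the hypothesis that makes the result true and is sharp, since the conclusion fails at type $\pi/2$ --- must enter that proof in an essential, quantitative way rather than through an incorrect density citation.
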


The intuition for the proof of this lemma comes from the maximum modulus principle: we note that we are considering a second order entire function $F$ which is bounded on all lattice points; this suggests that $F$ should be constant in the entire complex plane as long as the lattice is dense enough. This intuition is indeed correct, as evidenced by the following result independently discovered by V.~G.~Iyer \cite{iyer1936note} and A.~Pfluger \cite{pfluger1937analytic}.

\begin{theorem}
    \label{thm:coreresult}
    Let $h$ be an entire function such that 
    \[
        \limsup_{r \to \infty} \frac{\log M_h(r)}{r^2} < \frac{\pi}{2},
    \]
    where $M_h(r) := \max_{\lvert z \rvert = r} \lvert h(z) \rvert$. If there exists 
    a constant $\kappa > 0$ such that 
    \[
        \lvert h(m+\mathrm{i} n) \rvert \leq \kappa, \qquad m,n \in \mathbb{Z},
    \]
    then $h$ is constant.
\end{theorem}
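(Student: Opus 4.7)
The strategy hinges on two classical facts about the Gaussian integer lattice $\Lambda := \bbZ + \rmi \bbZ$: (i) the Weierstrass sigma function $\sigma$ associated with $\Lambda$ has precise order-two growth with matching lower bounds away from the lattice points, and (ii) $\Lambda$ is a uniqueness set for the Bargmann--Fock space $\calF^2(\bbC)$. The plan is to use (i) to derive an interpolation identity that pins $h$ down on $\Lambda$, and then to invoke (ii) to extend constancy from $\Lambda$ to all of $\bbC$.

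First I would introduce the Weierstrass sigma function
\begin{equation*}
\sigma(z) := z \prod_{\lambda \in \Lambda \setminus \{0\}} \left( 1 - \frac{z}{\lambda} \right) \exp\!\left( \frac{z}{\lambda} + \frac{z^2}{2 \lambda^2} \right),
\end{equation*}
an entire function of order two whose simple zeros are exactly the points of $\Lambda$. Its quasi-periodicity relations together with the Legendre relation give the two-sided estimate $\lvert \sigma(z) \rvert \asymp \exp(\pi \lvert z \rvert^2 / 2)$ outside $\tfrac14$-neighbourhoods of $\Lambda$, and in particular $\lvert \sigma'(\lambda) \rvert \gtrsim \exp(\pi \lvert \lambda \rvert^2 / 2 - O(\lvert \lambda \rvert))$. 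Next, fix $\eta > 0$ with $\log M_h(r) \leq (\pi/2 - \eta) r^2$ for $r$ large, and fix $z_0 \in \bbC \setminus \Lambda$. Applying the residue theorem to $w \mapsto h(w)\sigma(z_0)/[\sigma(w)(w-z_0)]$ on the square contours $\Gamma_N$ with corners $\pm(N+\tfrac{1}{2})(1 \pm \rmi)$ (which stay at distance $\geq \tfrac12$ from $\Lambda$), the integrand is of order $\lvert w \rvert^{-1}\exp(-\eta \lvert w \rvert^2)$ on $\Gamma_N$, so the boundary integral vanishes as $N \to \infty$ and yields the interpolation identity
\begin{equation*}
h(z_0) = \sigma(z_0) \sum_{\lambda \in \Lambda} \frac{h(\lambda)}{\sigma'(\lambda)(z_0 - \lambda)}.
\end{equation*}

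The crux is then to read off constancy of $h$ on $\Lambda$ from this identity. Combining the super-exponential decay of $h/\sigma$ at infinity (again from the slack $\eta > 0$) with the classical moment identities $\sum_{\lambda \in \Lambda} \lambda^k/\sigma'(\lambda) = 0$ for $k \geq 0$ (which follow from the analogous Mittag--Leffler expansion of $1/\sigma$ applied to the constant function), the Laurent expansion of the right-hand side at $z_0 = \infty$ forces $\sum_{\lambda \in \Lambda} [h(\lambda) - c]\, \lambda^k/\sigma'(\lambda) = 0$ for every $k \geq 0$, where $c := h(0)$; coupled with $\lvert h(\lambda) \rvert \leq \kappa$, this should be strong enough to give $h(\lambda) = c$ for every $\lambda \in \Lambda$. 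Once this is established, the entire function $\tilde h := h - c$ has the same order-two subcritical type as $h$, so $\int_\bbC \lvert \tilde h(z) \rvert^2 \rme^{-\pi \lvert z \rvert^2} \dd z \lesssim \int_\bbC \rme^{-2\eta \lvert z \rvert^2} \dd z < \infty$, placing $\tilde h$ in $\calF^2(\bbC)$. Since $\tilde h$ vanishes on $\Lambda$, a classical uniqueness set for $\calF^2(\bbC)$, it follows that $\tilde h \equiv 0$ and thus $h \equiv c$.

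The hard part will be the passage from the interpolation identity to pointwise constancy on $\Lambda$: this is where the strict slack $\eta > 0$ enters decisively, and making the moment argument rigorous requires a careful analysis exploiting the specific Weierstrass sigma and zeta identities for the Gaussian lattice (or, alternatively, an ad-hoc Phragmén--Lindelöf argument in the spirit of the original work of Iyer and Pfluger). The two-sided estimates on $\sigma$ invoked in the first step are standard in the theory of elliptic functions but also require some care.
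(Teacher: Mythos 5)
First, note that the paper itself gives no proof of Theorem~\ref{thm:coreresult}: it is quoted as a classical result of Iyer and Pfluger, so your argument cannot be matched against an internal one and must stand on its own. Much of your scaffolding is sound: the two-sided estimate $\abs{\sigma(z)} \asymp d(z,\Lambda)\,\rme^{\pi \abs{z}^2/2}$ for the Weierstrass sigma function of $\Lambda = \bbZ + \rmi\bbZ$ is correct (and special to lattices of density one), the contour argument over $\Gamma_N$ does yield the interpolation identity $h(z_0)/\sigma(z_0) = \sum_{\lambda} h(\lambda)/\bigl(\sigma'(\lambda)(z_0-\lambda)\bigr)$ with an absolutely convergent sum, and the closing step is fine --- indeed, once $h-c$ is known to vanish on all of $\Lambda$ you do not even need the (nontrivial) fact that $\Lambda$ is a uniqueness set for $\calF^2(\bbC)$, since $(h-c)/\sigma$ is entire and decays away from $\Lambda$, so the maximum principle and Liouville finish the job.

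The fatal problem is the step you yourself flag as the crux, and it is not merely unproven: the proposed moment argument provably cannot close the gap. The identities $\sum_{\lambda}\lambda^k/\sigma'(\lambda)=0$ for all $k \ge 0$, which you invoke, say precisely that subtracting a constant leaves the moment data unchanged: the conditions $\sum_{\lambda}(h(\lambda)-c)\,\lambda^k/\sigma'(\lambda)=0$ hold simultaneously for \emph{every} constant $c$, not only for $c=h(0)$, and the boundedness hypothesis $\abs{h(\lambda)}\le\kappa$ is likewise insensitive to which $c$ is chosen. Hence no deduction from these two inputs can single out the value $h(0)$ and conclude $h(\lambda)=h(0)$; if it could, it would prove $h\equiv c$ for two different constants at once. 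Equivalently, $(1/\sigma'(\lambda))_{\lambda\in\Lambda}$ is itself a nonzero sequence with Gaussian decay all of whose moments vanish (this is your moment identity with $k$ arbitrary), so the underlying principle ``rapid decay plus vanishing moments forces the zero sequence'' is false, and the interpolation identity alone does not force constancy on $\Lambda$ --- consistent with the fact that the critical lattice is a uniqueness set but neither a sampling nor an interpolation set for the Fock space, so that the series $\sigma(z_0)\sum_\lambda h(\lambda)/(\sigma'(\lambda)(z_0-\lambda))$ cannot be bounded term by term either. Repairing the proof requires a genuinely different mechanism at this point, e.g.\ the Phragm\'en--Lindel\"of/growth-comparison arguments of Iyer and Pfluger that you mention only in passing as an alternative.
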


\begin{proof}[Proof of Lemma~\ref{lem:restated}]
    Consider the function $h(z) := F(az)$, for $z \in \mathbb{C}$. It holds that 
    \[
        \lvert h(z) \rvert = \lvert F(az) \rvert \leq \norm{ F }_\mathcal{F} \cdot  \mathrm{e}^{\frac{\pi}{2} \lvert az \rvert^2} 
        = \norm{ F }_\mathcal{F} \cdot \mathrm{e}^{\frac{\pi a^2}{2} \lvert z \rvert^2},
    \]
    for $z \in \bbC$, such that 
    \begin{align*}
        \limsup_{r \to \infty} \frac{\log M_h(r)}{r^2} &\leq \limsup_{r \to \infty} \left(\frac{\log \lVert F \rVert_\mathcal{F}}{r^2} + \frac{\pi a^2}{2} \right) \\
        &= \frac{\pi a^2}{2} < \frac{\pi}{2}.
    \end{align*}
    Additionally,
    \[
        \lvert h(m+\mathrm{i} n) \rvert = \lvert F(am + \mathrm{i} a n) \rvert = 1, \qquad m,n \in \mathbb{Z},
    \]
    holds such that the assumptions of Theorem \ref{thm:coreresult} are met and we can 
    conclude that $h$ is constant. As $\lvert h(0) \rvert = 1$, it follows that there must exist an $\alpha 
    \in \bbR$ such that $h = \mathrm{e}^{\mathrm{i} \alpha}$ which implies $F = \mathrm{e}^{\mathrm{i} \alpha}$.
\end{proof}

We have therefore shown that the set of counterexamples is not equal to the whole of $L^2(\bbR)$ when $\Lambda$ is a sufficiently dense quadratic lattice.

\begin{remark}
    A natural confusion that might arise in connection with Theorem~\ref{thm:original} is in how far it is different from the result in \cite{grohs2023injectivity} on shift-invariant spaces with Gaussian generator $V_\beta^1(\varphi)$. While Theorem~\ref{thm:original} implies that the Gaussian can be distinguished from all other functions in $L^2(\bbR)$ based on its sampled Gabor magnitude measurements, the result in \cite{grohs2023injectivity} only implies that it can be distinguished from the functions in $V_\beta^1(\varphi) \subset L^2(\mathbb{R})$.
\end{remark}

\section*{Acknowledgements}

The authors would like to extend their heartfelt thanks to Stefan Steinerberger for his insightful discussions and acknowledge funding through the SNSF grant 200021\_184698.

\bibliographystyle{IEEEtran}
\bibliography{IEEEabrv,bibfile_final}

\end{document}